\documentclass{amsart}
\usepackage{amsthm,enumerate}
\usepackage{amsmath,amssymb}
\newtheorem{theorem}{Theorem}

\newtheorem{lem}[theorem]{Lemma}
\newtheorem{definition}[theorem]{Definition}

\numberwithin{theorem}{section}
\newtheorem{rem}[theorem]{Remark}
\allowdisplaybreaks
\newcommand{\Om} {\Omega}

\newcommand{\be} {\begin{equation}}
\newcommand{\ee} {\end{equation}}
\newcommand{\bea} {\begin{eqnarray}}
\newcommand{\eea} {\end{eqnarray}}
\newcommand{\Bea} {\begin{eqnarray*}}
\newcommand{\Eea} {\end{eqnarray*}}

\newcommand{\De} {\Delta}
\newcommand{\la} {\lambda}
\newcommand{\h}{\mathbb{H}^n}
\newcommand{\Dh}{\Delta_{\mathbb{H}^n}}
\newcommand{\nh}{\nabla_{\mathbb{H}^n}}
\def\R{{\mathbb R}}
\def\H{{\mathbb H}}

\newcommand{\norm}[1]{\left\Vert#1\right\Vert}
\newcommand{\rar}{\rightarrow}
\numberwithin{equation}{section}
\begin{document}
\title[Stability of positive solution ]{Stability of positive solution to biharmonic equations with logistic-type nonlinearities on Heisenberg group}
\author[G.Dwivedi,\, J.\,Tyagi ]
{ G. Dwivedi,  J.Tyagi }

\address{G.\,Dwivedi \hfill\break
 Birla Institute of Technology and Science Pilani, Pilani Campus \newline
 Pilani, Jhunjhunu \newline
 Rajasthan, India - 333031}
 \email{gaurav.dwivedi@pilani.bits-pilani.ac.in}

\address{J.\,Tyagi \hfill\break
 Indian Institute of Technology Gandhinagar \newline
 Palaj, Gandhinagar \newline
 Gujarat, India - 382355}
 \email{jtyagi@iitgn.ac.in, jtyagi1@gmail.com}

\thanks{Submitted 27--04--2019.  Published-----.}
\subjclass[2010]{Primary 35B35;  Secondary 35B09, 35J91, 35R03.}
\keywords{bi-Laplacian;\,semi-stability;\,positive solution;\,Heisenberg group.}
\begin{abstract}
In this note, we establish the existence of a positive solution and its semi-stability to the following class of biharmonic problems with logistic-type nonlinearities
\begin{equation}\label{ab1}
\left\{
  \begin{array}{ll}
    \Delta_{\mathbb{H}^n}^2u=a(\xi)u-f(\xi,u)\,\,\,\,\text{in }\Omega\\
    u|_{\partial\Omega}=0=\left.\Delta_{\mathbb{H}^n} u\right|_{\partial\Omega},
 \end{array}
\right.
\end{equation}
where $\Omega\subset\mathbb{H}^n$ is an open, smooth and bounded subset of Heisenberg group $\H^n.$ 
We establish the existence of a solution by Schauder's fixed point theorem and then with the aid of strong maximum principle, 
we obtain the positivity of the solution.
We also show that the principal eigenvalue of the linearized equation
associated with \eqref{ab1} is non-negative and hence the solution $u$ of \eqref{ab1} is semi-stable. This is shown by testing the equation under consideration with a suitable 
test function.

\end{abstract}
\maketitle

\section{Introduction}
The aim of this note is to establish the existence of a solution and its semi-stability to the following biharmonic problem on Heisenberg group:
\begin{equation}\label{pm}
\left\{
  \begin{array}{ll}
    \Delta_{\mathbb{H}^n}^2u=a(\xi)u-f(\xi,u)\,\,\,\,\text{in }\Omega\\
    u>0 \,\,\,\,\,\, \text{ in } \Omega\\
   u=0=\Delta u \,\,\,\,\, \text{on } \partial\Omega,
 \end{array}
\right.
\end{equation}
where $\Omega\subset\mathbb{H}^n$ is an open, smooth and bounded subset of Heisenberg group $\H^n,$ $a\in L^{\infty}(\Omega)$ and $f\in C(\overline{\Om}\times \R,\,\R).$ 

The functional associated with \eqref{pm} is
$$E\colon D^{2,2}(\Omega)\cap D_0^{1, 2}(\Omega)  \longrightarrow \mathbb{R}$$
defined by
\[E(u)=\frac{1}{2}\int_\Omega|{\Delta_{\mathbb{H}^n}} u|^2 d\xi-\frac{1}{2}\int_\Omega a(\xi)u^2 d\xi +\int_\Omega F(\xi,\,u)d\xi,\]
where
\[F(\xi,s)=\int_0^s f(\xi,t)dt\]
and the spaces $D^{2,2}(\Omega)$  and  $D_0^{1, 2}(\Omega)$ are defined later.
The weak formulation of \eqref{pm} is the following:
\begin{equation}\label{we}
\int_\Omega {\Delta_{\mathbb{H}^n}} u{\Delta_{\mathbb{H}^n}}\phi d\xi =\int_\Omega a(\xi)u\phi d\xi -\int_\Omega f(\xi,u)\phi d\xi ,~~~~~~~\forall~\phi\in C_c^2(\Omega),
\end{equation}
where $C_c^2(\Omega)$ is the space of $C^2$ functions in $\Omega$ with a compact support in $\Omega.$
The linearized operator $L_u$ associated with \eqref{pm} at a given solution $u$ is defined by the following duality:
\[ L_u: v\in D^{2, 2}(\Omega)\cap D_0^{1, 2}(\Omega)  \longrightarrow L_{u}(v)\in(D^{2, 2}(\Omega)\cap D_0^{1, 2}(\Omega) )',\]
where
\[L_{u}(v): \psi \in D^{2, 2}(\Omega)\cap D_0^{1, 2}(\Omega)  \longrightarrow L_{u}(v,\psi)\]
and
\[L_u(v,\psi)= \int_\Omega{\Delta_{\mathbb{H}^n}} v {\Delta_{\mathbb{H}^n}}\psi d\xi -\int_\Omega a(\xi)v\psi d\xi +\int_\Omega f_{u}(\xi,u)v \psi d\xi .\]

It is easy to see that $L_u$ is well-defined and the first eigenvalue of $L_u$ is given by
\be\label{pev}
\la_1= \inf_{\substack{v\in D^{2, 2}(\Omega)\cap D_{0}^{1,2}(\Om)\\v\neq 0}} \frac{L_{u}(v,\,v)} {\int_{\Om}v^2 d\xi  }.
\ee

We say that the solution $u$ of \eqref{pm} is semi-stable if
\begin{equation}\label{stb}
\int_\Omega|{\Delta_{\mathbb{H}^n}} v|^2 d\xi -\int_\Omega a(\xi)v^2 d\xi +\int_\Omega f_{u}(\xi,\,u) v^2 d\xi  \geq  0
\end{equation}
for every $v\in~C_c^2(\Omega),$ see \cite{wei2} and the references therein for the definition of stability (semi-stability) of solutions to biharmonic problems.
Actually, \eqref{stb} implies that the principal eigenvalue of the linearized equation
associated with \eqref{pm} is non-negative and hence the solution $u$ of \eqref{pm}  is semi-stable.

This work is motivated by the recent works on polyharmonic equations, where the authors obtained 
stability properties of solution to polyharmonic equation with exponential nonlinearity, see \cite{farina,huang} and for the stability results to biharmonic equation, 
see \cite{ber,davila,pkara}
and for Liouville theorems of stable radial solution to biharmonic equations, see \cite{war}. The nonlinearities of the type
\[a(\xi)u-f(\xi,u)\]
are known as logistic-type nonlinearities, see for instance \cite{af,canada,du, gao,take,take1} and references therein, which motivate us to 
consider the same for the stability questions to biharmonic equations on Heisenberg group.

In the context of the above research works, it is natural to ask, whether we can obtain stability/semi-stability of the positive solution to biharmonic problems with 
logistic-type nonlinearities on Heisenberg group. More precisely, the aim of this paper is to answer this question. 

In fact, first,  we establish the existence of positive solution to \eqref{pm} and then we prove the semi-stability of the positive solution to \eqref{pm}. 
We establish the existence of a solution by Schauder's fixed point theorem and then strong maximum principle yields the positivity of the solution.
In order to show the semi-stability of the solution, we show that the principal eigenvalue of the linearized equation
associated with \eqref{pm} is non-negative and hence the solution $u$ of \eqref{pm} is semi-stable. This is shown by testing the equation under consideration against a suitable 
test function.

To the best of our knowledge, we are not aware of any results on the semi-stability of positive solution for the biharmonic equations on the Heisenberg group. 
For the existence of positive solution to problems similar to \eqref{pm} in $\Omega \subseteq \R^n,$  we refer to \cite{bernis,dal,davila1,davila2,ebo, sat,zhang2,zhanglu} 
and the references therein.  For the existence of positive solution to equations involving  Kohn-Laplace operator on Heisenberg group, we refer to \cite{bran,zhang,zhang1} and 
references cited therein.\\
We make the following hypotheses on the nonlinearity $f$ and weight $a$:

\noindent(H1) Let $f\in C(\overline{\Omega}\times\mathbb{R}, \mathbb{R})$ and $C^{1}$ in the $y$ variable and be satisfy
\[f_y(\xi,y)\geq\frac{f(\xi,y)}{y},\,\,\,\,\,\forall\,\,\, 0< y\in\mathbb{R},\,\,\,\forall\,\,\xi\in \Om.\]

\noindent(H2) Let $a\in L^{\infty}(\Omega).$ There exists $r\in L^{q'}(\Omega)$ such that
\begin{enumerate}[(i)]
    \item for $n=1(Q=4),$  $|f(\xi,s)| \leq r(\xi)+c|s|^{q-1},\,\text{ for a.e.}\, \xi\in\Om, \, \forall s\in \mathbb{R},$ where $r\in L^q(\Om),$ and  $c>0$ is a 
    constant  such that $(c+\norm{a}_\infty)c_{emb}^2 < 1$ and $c_{emb}$  is constant of the embedding of $D^{2, 2}(\Omega)\cap D_0^{1, 2}(\Omega)$ into  $L^q(\Om),\, q\geq 1$ is arbitrary.
\item for $n> 1 (Q>4),$  $|f(\xi,s)| \leq r(\xi)+c|s|^{q},\,\text{ for a.e.}\, \xi\in\Om, \, \forall s\in \mathbb{R},$ where $q<\frac{Q+4}{Q-4},$  $r\in L^q(\Om),$ and  $c>0$ 
is a constant  such that $(c+\norm{a}_\infty)c_{emb}^2 < 1.$
\end{enumerate}

\noindent(H3) Let $a(\xi)s-f(\xi,s) \geq 0,$ for a.e. $\xi\in \Omega$ and for all $s\in \R.$

\begin{rem}
The functions of the form $f(\xi,y)=g(\xi)y^r,$ where  $r>1$ and $g(\xi)\geq 0$ satisfy  hypothesis (H1).
\end{rem}
The functional associated with \eqref{pm} is
$$E\colon D^{2, 2}(\Omega)\cap D_0^{1, 2}(\Omega)  \longrightarrow \mathbb{R}$$
defined by
\[E(u)=\frac{1}{2}\int_\Omega|{\Delta_{\mathbb{H}^n}} u|^2 d\xi-\frac{1}{2}\int_\Omega a(\xi)u^2 d\xi +\int_\Omega F(\xi,\,u)d\xi,\]
where
\[F(\xi,s)=\int_0^s f(\xi,t)dt.\]
Throughout the article, the space $D^{2, 2}(\Om)\cap D_0^{1,2}(\Om)$ is denoted by $D.$

The following are the main results of this paper, which we will prove in the last section.
\begin{theorem}\label{th2}
Let \rm{(H2)}--\rm{(H3)} hold. Then \eqref{pm} has a positive solution.
\end{theorem}
\begin{theorem}\label{th1}
Let \rm{(H1)}--\rm{(H3)} hold. Let $u\in D \cap L^{\infty}(\Omega)$ be a positive solution of \eqref{pm}. Then $u$ is semi-stable.
\end{theorem}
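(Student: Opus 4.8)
The plan is to show that the quadratic form on the left of \eqref{stb} is nonnegative for every $v\in C_c^2(\Om)$ by testing the weak formulation \eqref{we} against $\phi=v^2/u$ and controlling the resulting expression with a pointwise Picone-type identity for the sub-Laplacian. Since $v$ has compact support in $\Om$ and $u>0$ is (after hypoelliptic regularity) of class $C^2$ with $u$ bounded below on $\mathrm{supp}\,v\Subset\Om$, the function $\phi=v^2/u$ is an admissible test function in $C_c^2(\Om)$. Substituting it into \eqref{we} and using $u\cdot(v^2/u)=v^2$ gives
\[
\int_\Om \Dh u\,\Dh\!\left(\frac{v^2}{u}\right)d\xi=\int_\Om a(\xi)v^2\,d\xi-\int_\Om\frac{f(\xi,u)}{u}\,v^2\,d\xi.
\]

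First I would establish the sign of $\Dh u$. By (H2) applied at $s=u(\xi)>0$ we have $\Dh^2u=a(\xi)u-f(\xi,u)\ge0$ in $\Om$. Setting $w=-\Dh u$, the boundary condition in \eqref{pm} gives $w=0$ on $\pa\Om$, while $\Dh w=-\Dh^2u\le0$, so $w$ is $\Dh$-superharmonic. The weak maximum principle for the sub-Laplacian on $\h$ then yields $w\ge0$, i.e.
\[
\Dh u\le0\qquad\text{in }\Om.
\]

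The key step is the pointwise identity, valid for $u>0$ by the Leibniz rule $\Dh(fg)=f\Dh g+g\Dh f+2\,\nh f\cdot\nh g$ and the chain rule for $\nh$:
\[
|\Dh v|^2-\Dh u\,\Dh\!\left(\frac{v^2}{u}\right)=\left(\Dh v-\frac{v}{u}\Dh u\right)^2-\frac{2\,\Dh u}{u}\left|\nh v-\frac{v}{u}\nh u\right|^2.
\]
Since $u>0$ and $\Dh u\le0$, both terms on the right are nonnegative, whence $\int_\Om|\Dh v|^2\,d\xi\ge\int_\Om\Dh u\,\Dh(v^2/u)\,d\xi$. Combining this with the tested weak formulation above, and then with (H1), which gives $f_u(\xi,u)-f(\xi,u)/u\ge0$, we obtain
\[
\int_\Om|\Dh v|^2\,d\xi-\int_\Om a(\xi)v^2\,d\xi+\int_\Om f_u(\xi,u)v^2\,d\xi\ge\int_\Om\!\left(f_u(\xi,u)-\frac{f(\xi,u)}{u}\right)v^2\,d\xi\ge0,
\]
which is exactly \eqref{stb}; hence $\la_1\ge0$ and $u$ is stable.

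The main obstacle is the Picone-type identity, both in verifying the algebra for the noncommuting horizontal vector fields (one must check that the Leibniz and chain rules used produce no extra terms, which holds because $\Dh$ is a sum of squares of first-order derivations) and in recognizing that the surplus gradient term $-\tfrac{2\Dh u}{u}|\nh v-\tfrac{v}{u}\nh u|^2$ acquires a definite sign only after the superharmonicity $\Dh u\le0$ is extracted from (H2) via the maximum principle. A secondary technical point is justifying the regularity of $u$ needed to make $\Dh u$ pointwise defined and to apply the maximum principle; this follows from hypoellipticity of the sub-Laplacian together with the $C^1$ assumption on $f$.
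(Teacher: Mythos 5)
Your proposal is correct and follows essentially the same route as the paper: testing the weak formulation \eqref{we} with $\phi=v^2/u$, extracting the sign of $\Dh u$ from (H2) via the maximum principle, using the identical square decomposition $\bigl(\Dh v-\tfrac{v}{u}\Dh u\bigr)^2-\tfrac{2\Dh u}{u}\bigl|\nh v-\tfrac{v}{u}\nh u\bigr|^2$, and concluding with (H1). The only cosmetic differences are that you package the algebra as a pointwise Picone-type identity before integrating (the paper rearranges under the integral sign), and you settle for the weak inequality $\Dh u\le 0$ where the paper's Lemma \ref{l1} invokes the strong maximum principle to get $-\Dh u>0$ --- strictness your argument shows is not actually needed for stability.
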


We organize this paper as follows. Section 2 deals with useful preliminaries on the Heisenberg group and important results, which we shall use in next section. 
The proofs of main theorems and auxiliary lemmas are a part of
Section 3. A few remarks are given in Section 4.

\section{Preliminaries}
We begin this section with the briefs on Heisenberg group and the auxiliary results which are used in order to 
prove the main results. The Heisenberg group $\H^{n}= (\R^{2n+1},\,\textbf{.}),$ is the space
$\R^{2n+1}$ with the non-commutative law of product
$$(x,y,t)\textbf{.}(x',y',t')= (x+x',\,y+y', t+t'+ 2(\langle y,x'\rangle-\langle x,\,y'\rangle)),                     $$
where $x,\,y,\,x',\,y'\in \R^{n},\,t,\,t'\,\in \R$ and $\langle \cdot,\cdot \rangle$ denotes the standard inner product in $\R^{n}.$ The homogeneous dimension of $\H^n$ is $Q=2n+2.$
This operation endows $\H^{n}$ with the structure of a Lie group. The Lie algebra of $\H^{n}$
is generated by the left-invariant vector fields
$$ T= \frac{\partial}{\partial t},\,\,\,X_{i}= \frac{\partial}{\partial x_{i} }+ 2y_{i} \frac{\partial}{\partial t},\,\,
Y_{i}= \frac{\partial}{\partial y_{i} }- 2x_{i} \frac{\partial}{\partial t},\,\,i=1,\,2,.\,3,\,\ldots,\,n. $$
These generators satisfy the non-commutative formula
$$[X_{i},Y_{j}]= -4 \delta_{ij} T,\,\,[X_{i},X_{j}]= [Y_{i},Y_{j}]=  [X_{i},T] = [Y_{i},T] =0.$$

Let
$ z=(x,\,y)\in \R^{2n},\,\,\xi= (z,\,t)\in \H^{n}.$ The parabolic dilation $$\delta_{\la}\xi= (\la x,\,\la y,\,\la^{2} t)$$ satisfies
$$  \delta_{\la}(\xi_{0}\textbf{.}\xi     )  =  \delta_{\la}\xi   \textbf{.} \delta_{\la} \xi_{0}  $$
and $$ || \xi||_{\H^{n}}  = (|z|^{4}+ t^{2} )^{\frac{1}{4} }= ((x^2 + y^2)^{2}+ t^{2} )^{\frac{1}{4} }  $$
is a norm with respect to the parabolic dilation which is known  as Kor\'{a}nyi gauge norm $N(z,\,t).$
In other words,
$\rho(\xi) = (|z|^{4}+ t^{2} )^{\frac{1}{4} }$
 denotes the Heisenberg distance between $\xi$ and the origin.
Similarly, one can define the distance between
$(z,\,t)$ and $(z',\,t')$ on $\H^{n}$ as follows:
$$\rho(z,\,t;z',t')= \rho((z',\,t')^{-1}\,.\,(z,\,t)).  $$
 It is clear that the vector fields $X_{i},\,Y_{i},\,i=1,\,2,\,\ldots,\,n$ are homogeneous of degree $1$ under the norm
$||.\,||_{\H^{n}}$ and $T$ is homogeneous of degree $2.$
The Kor\'{a}nyi ball of center $\xi_{0}$ and radius $r$ is defined by
$$  B_{\H^{n}}(\xi_{0},\,r)= \{ \xi: ||\xi^{-1}\textbf{.} \xi_{0} || \leq r  \}                  $$
and it satisfies
$$ |B_{\H^{n}}(\xi_{0},\,r)| = |B_{\H^{n}}(0,\,r)| =  r^{Q}|B_{\H^{n}}(\textbf{0},\,1)|,            $$
where $|.|$ is the $(2n+1)$-dimensional Lebesgue measure on $\H^{n}$  and $Q= 2n+2$ is the so-called the homogeneous dimension of Heisenberg group $\H^{n}.$
The Heisenberg gradient and Heisenberg Laplacian or the Laplacian-Kohn operator on
$\H^{n}$ are defined as follows:
$$\nabla_{\H^{n}}= (X_{1},\,X_{2},\,\ldots,\,X_{n},\, Y_{1},\,Y_{2},\,\ldots,\,Y_{n})$$
and
$$  \De_{\H^{n}}= \sum_{i=1}^{n} X_{i}^{2}+ Y_{i}^{2}=
\sum_{i=1}^{n}\left(\frac{\partial^{2} }{\partial x_{i}^{2}}+  \frac{\partial^{2} }{\partial y_{i}^{2}} +
4 y_{i} \frac{\partial^{2} }{\partial x_{i}\partial t}- 4 x_{i} \frac{\partial^{2} }{\partial y_{i}\partial t}
+ 4(x_{i}^{2}+y_{i}^{2} )\frac{\partial^{2} }{\partial t^{2}}   \right).$$

\begin{definition}[$D^{1,p}(\Omega)$ and $D_0^{1,p}(\Omega)$ Space]
 Let $\Omega \subseteq \h$ be open and $1<p<\infty$. Then we define
 \[D^{1,p}(\Omega)=\{u:\Omega\rightarrow \R\text{ such that }\, u,|\nh u| \in L^p(\Omega) \}.\]
 $D^{1,p}(\Omega)$ is equipped with the norm
 \[\norm{u}_{D^{1,p}(\Omega)}=\left(\norm{u}_{L^p(\Omega)}+\norm{\nh u}_{L^p(\Omega)}\right)^{\frac{1}{p}}.\]
 $D_0^{1,p}(\Omega)$ is the closure of $C_c^\infty(\Omega)$ with respect to the norm
 \[\norm{u}_{D_0^{1,p}(\Omega)}=\left(\int_\Omega |\nh u|^p dz dt\right)^{\frac{1}{p}}.\]
\end{definition}
\begin{definition}[$D^{2,p}(\Omega)$ and $D_0^{2,p}(\Omega)$ Space]
 Let $\Omega \subseteq \h$ be open and $1<p<\infty$. Then we define
 \[D^{2,p}(\Omega)=\{u:\Omega\rightarrow \R\text{ such that }\, u,|\nh u|, |\Dh u| \in L^p(\Omega) \}.\]
 $D^{2,p}(\Omega)$ is equipped with the norm
 \[\norm{u}_{D^{2,p}(\Omega)}=\left(\norm{u}_{L^p(\Omega)}+\norm{\nh u}_{L^p(\Omega)}+\norm{\Dh u}^p\right)^{\frac{1}{p}}.\]
 $D_0^{2,p}(\Omega)$ is the closure of $C_c^\infty(\Omega)$ with respect to the norm
 \[\norm{u}_{D_0^{2,p}(\Omega)}=\left(\int_\Omega |\Dh u|^p dz dt\right)^{\frac{1}{p}}.\]
\end{definition}
\begin{theorem}[Compact Embedding \cite{bong}]\label{emb} Let $\Om$ be a bounded domain in $\H^n$ and $Q=2n+2$ be homogeneous dimension of $\H^n,$ then the following embeddings are compact:
\begin{enumerate}[(i)]
\item If $Q=4,$ then $D^{2, 2}(\Omega) \cap D_0^{1,2}(\Omega)\hookrightarrow L^q(\Om),  \,\, 1\leq q<\infty.$
\item If $Q>4,$ then $D^{2,2}(\Omega) \cap D_0^{1,2}(\Omega)\hookrightarrow L^q(\Om),  \,\, 1\leq q<\frac{2Q}{Q-4}.$
\end{enumerate}
\end{theorem}

\begin{theorem}[Nemytski operator \cite{dra}]\label{nem} Let $\Om\subseteq H^n$ and  $f:\Omega\times \R \rar \R$ be a Carath\'{e}odory function and $p,q\in \left[\left.1,\infty\right)\right.$ Let there exist $g\in L^q(\Omega)$
and $c\in \R$ be such that
\[|f(\xi, y) \leq g(\xi)+c|y|^{p/q},\,\,\text{ for a.e. } \xi\in \Omega \text{ and for all } y\in \R.\]
Then the operator $F: \phi \mapsto f(\cdot, \phi(\cdot)) $ has the following properties:
\begin{enumerate}[(i)]
\item $F(\phi)\in L^q(\Om)$ for all $\phi\in L^p(\Om).$
\item  $F$ is a continuous mapping from $L^p(\Om)$ into $L^q(\Om).$
\item $F$ maps bounded sets in $L^p(\Omega)$ into bounded sets in $L^q(\Omega)$.
\end{enumerate}
\end{theorem}
\begin{theorem}[Schauder fixed point theorem \cite{dra}]\label{schauder}
Let $\mathcal{M}$ be a nonempty, closed,
convex and bounded subset of a normed linear space $X$. Assume that $F$ is a compact operator from $\mathcal{M}$ to X such that
and $F(\mathcal{M})\subseteq \mathcal{M}$. Then there is a fixed point of $F$ in $\mathcal{M}$.
\end{theorem}
\section{Proof of Theorem \ref{th2} and Theorem \ref{th1}:}
In order to prove Theorem \ref{th1}, first, we prove the following lemma:
\begin{lem}\label{l1}
Let $u\in D^{2,2}(\Om)\cap D_{0}^{1,2}(\Om)$ be a nonnegative weak solution (not identically zero) of
 \begin{equation}\label{ev1}
\Dh^{2} u =  a(\xi) u-f(\xi,u)\,\,\,\mbox{in}\,\,\Omega,\,\,\,\,u= \Dh u = 0\,\,\,\mbox{on}\,\,\partial\Om,
\end{equation}
where $a$ and $f$ satisfy \rm{(H3)}, then $-\Dh u>0$ in $\Om$ and $u>0$ in $\Om.$
\end{lem}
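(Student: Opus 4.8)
The plan is to use the standard device of splitting the fourth-order operator $\Dh^2$ into two second-order Kohn--Laplacians and then to invoke the maximum principle for $-\Dh$ on $\h$ twice. Introduce the auxiliary function $w:=-\Dh u$. The boundary condition $\Dh u=0$ on $\pa\Om$ translates into $w=0$ on $\pa\Om$, and rewriting \eqref{ev1} in terms of $w$ shows that $w$ is a weak solution of
\be
-\Dh w=\Dh^2u=a(\xi)u-f(\xi,u)\quad\text{in }\Om,\qquad w=0\quad\text{on }\pa\Om.
\ee
Since $u\ge 0$ by hypothesis, evaluating (H2) at $s=u(\xi)$ gives $a(\xi)u-f(\xi,u)\ge 0$ for a.e. $\xi\in\Om$; thus $w$ is Kohn-superharmonic with vanishing boundary data.

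Next I would run the maximum principle for $-\Dh$ on $w$ in two stages. Testing the weak form of the $w$-equation against the negative part $w^-=\max(-w,0)$ gives $\int_\Om|\nh w^-|^2\,d\xi\le 0$, whence $w^-\equiv 0$ and $w\ge 0$ in $\Om$ (weak maximum principle). To promote this to strict positivity I would first rule out $w\equiv 0$: if $w\equiv 0$, then $\Dh u=0$ in $\Om$ together with $u=0$ on $\pa\Om$ forces $u\equiv 0$ by uniqueness for the Dirichlet problem, contradicting the assumption that $u$ is not identically zero. Hence $w\ge 0$, $w\not\equiv 0$ and $-\Dh w\ge 0$, and Bony's strong maximum principle for the subLaplacian---applicable because $X_1,\dots,X_n,Y_1,\dots,Y_n$ together with the commutators $[X_i,Y_i]=-4T$ satisfy H\"ormander's bracket condition, so $\Dh$ is hypoelliptic---yields $w>0$ throughout $\Om$, that is $-\Dh u>0$ in $\Om$.

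The same scheme now applies one level lower. The function $u$ satisfies
\be
-\Dh u=w>0\quad\text{in }\Om,\qquad u=0\quad\text{on }\pa\Om,
\ee
with strictly positive right-hand side, so a final application of the strong maximum principle gives $u>0$ in $\Om$, as claimed.

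The step I expect to be the main obstacle is justifying these pointwise statements for a merely weak solution $u\in D^2(\Om)\cap D_0^1(\Om)$, since the maximum principle requires (local) continuity of $w$ and $u$. The remedy is subelliptic regularity theory for $\Dh$: the datum $a(\xi)u-f(\xi,u)$ belongs to a suitable $L^p(\Om)$ (using $a\in L^\infty$, the continuity of $f$, and the Folland--Stein embedding for $D^2(\Om)\cap D_0^1(\Om)$), so the subelliptic $L^p$ and Schauder estimates bootstrap $w$, and then $u$, to $C^{2,\al}_{loc}(\Om)$, after which the classical strong maximum principle applies. One must also verify that the zero boundary trace of $w$ is attained in the $D_0^1(\Om)$ sense, so that the test-function computation with $w^-$ is legitimate.
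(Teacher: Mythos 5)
Your proof follows essentially the same route as the paper: the same splitting $w=-\Dh u$ into a second-order system, the same use of (H2) and Bony's maximum principle to obtain $w\ge 0$, the same contradiction argument to exclude $w\equiv 0$, and the same final application of the strong maximum principle to conclude $u>0$ in $\Om$. The regularity issue you flag as the main obstacle is handled by the paper in a separate remark (Remark \ref{re1}) via the subelliptic estimates of Capogna--Danielli--Garofalo and Xu, exactly along the lines you propose.
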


\begin{proof}
 Let $-\Dh u =v.$ Then writing \eqref{ev1} into system form, we get
\begin{equation}\label{sy}
\left\{
\begin{array}{ll}
    -\Dh u = v\,\,\,\mbox{in}\,\,\,\, \Omega,\\
    -\Dh v= a(\xi) u-f(\xi,u)\,\,\,\mbox{in}\,\,\,\,\Omega,\\
   u=0=v\,\,\,\,\,\mbox{on} \,\,\,\partial\Omega.
  \end{array}
\right.
\end{equation}
Since $a(\xi)u-f(\xi,u)\geq 0$ in $\Om,$ so by maximum principle \cite{bony}, we get $v\geq 0.$ By strong maximum principle, either $v>0$ or $v\equiv 0$ in $\Om.$
If $v\equiv 0,$ then we have
$$ -\Dh u = 0\,\,\,\mbox{in}\,\,\,\, \Omega;\,\,\,u= 0\,\,\,\mbox{on} \,\,\partial \Om.$$
Again by maximum principle, we get $u\equiv 0,$ which is a contradiction and therefore $v>0$ in $\Om$ and hence
$$-\Dh u > 0\,\,\,\mbox{in}\,\, \Omega.  $$
Again, since $-\Dh u>0$ in $\Om,$ by strong maximum principle, we get
$$u>0 \,\,\text{in } \Omega.$$
 \end{proof}

 \begin{lem}\label{re1}
  Let \rm{(H2)}--\rm{(H3)} hold. Let $u\in D^{2,2}(\Om)\cap D_{0}^{1,2}(\Om)$ be a nonnegative weak solution (not identically zero) of \eqref{ev1}.  
  Then $u\in C^{4,\alpha}(\Omega).$
 \end{lem}
 \begin{proof}
 Let $-\Dh u =v.$ Then writing \eqref{ev1} into system form, we get
\begin{equation}\label{sy1}
\left\{
\begin{array}{ll}
    -\Dh u = v\,\,\,\mbox{in}\,\,\,\, \Omega,\\
    -\Dh v= a(\xi) u-f(\xi,u)\,\,\,\mbox{in}\,\,\,\,\Omega,\\
   u=0=v\,\,\,\,\,\mbox{on} \,\,\,\partial\Omega.
  \end{array}
\right.
\end{equation}
An application of Theorem\,\ref{emb} with boot-strap arguments yields that $v\in L^{\infty}(\Omega).$
Now, by using Theorem 3.35 \cite{cap} for second equation in \eqref{sy1}, we conclude that $v\in C^\alpha(\Omega)$ for some $0<\alpha<1.$ Then by using Theorem 3.9 \cite{xu},
we get that $u\in C^{2,\alpha}(\Omega).$
Again applying Theorem 3.35 \cite{cap} and Theorem 3.9 \cite{xu} for $u\in C^{2,\alpha}(\Omega),$ we conclude that $u\in C^{4,\alpha}(\Omega).$ This completes the proof of this lemma.
\end{proof}

\noindent{\textbf{Proof of Theorem \ref{th2}}:} The weak formulation of \eqref{pm} is as follows:

\noindent Find $u\in D$ such that
\be\label{weakform}
\int_\Om \Dh u \Dh \phi d\xi =\int_\Om a(\xi) u \phi d\xi-\int_\Om f(\xi,u) \phi d\xi,\,\,\, \forall \,\phi\in D.
\ee
For each $u\in D,$ let us define
\[\bar{L}_u:v \mapsto \int_\Om \Dh u\Dh v d\xi\]
 and
 \[\bar{S}_u: v\mapsto \int_\Om (a(\xi)u-f(\xi,u))v d\xi.\]
Since $\bar{L}_u$ and $\bar{S}_u$ are continuous linear functionals on Hilbert space $D,$ therefore by Reisz representation theorem, there exist unique elements $Lu$ and $Su$ such that
\be\label{w1}
\bar{L}_u(v) =\langle Lu, v\rangle,\,\, \bar{S}_u(v) =\langle Su, v \rangle,\,\,\forall v\in D.
\ee
In view of \eqref{w1}, solving \eqref{weakform} is equivalent to showing that the operator equation
\be\label{w2}
Lu=Su
\ee
has a solution in $D.$ Observe that, the  operator Equation \eqref{w2} has a solution if the operator $S:D\rar D$ has a fixed point. We will use Schauder fixed point theorem to prove that the operator $S,$ in fact, has a fixed point. First, we show that $S$ is compact.
 Let $\mathcal{M} \subseteq D$ be a bounded subset and $\{u_n\}$ be a sequence in $S(\mathcal{M})$. Then there exists $\{w_n\}\subseteq \mathcal{M}$ such that
\[S(w_n)=u_n.\]
Since $D$ is a reflexive Banach space and $\{w_n\}$ is a bounded sequence in $D,$ therefore, up to a subsequence
   \[w_n \rightharpoonup w \text{ in } D.\]
Next, our aim is to show that $S(w_n)$ converges to $S(w)$ for some $w\in \mathcal{M}.$ Consider

\begin{align}\nonumber
\norm{S(w_n)-S(w)}&= \sup_{{\substack{v\in D \\ \norm{v}\leq 1}}}\left|\langle S(w_n)-S(w), v\rangle \right|\\ \nonumber
& \leq  \sup_{{\substack{v\in D \\ \norm{v}\leq 1}}}\left( \int_\Om |a(\xi)||w_n-w||v| d\xi +\int_\Om |f(\xi,w_n)-f(\xi,w)||v| d\xi\right) \\ \nonumber
& \leq \sup_{{\substack{v\in D \\ \norm{v}\leq 1}}}\left(\norm{a}_\infty\int_\Om |w_n-w||v| d\xi +\int_\Om |f(\xi,w_n)-f(\xi,w)||v| d\xi\right)\\ \label{c1}
&\leq \sup_{{\substack{v\in D \\ \norm{v}\leq 1}}}\left(\norm{a}_\infty\norm{w_n-w}_{q}\norm{v}_{q'} +\norm{f(\cdot,w_n)-f(\cdot,w)}_{q'}\norm{v}_{q} \right),
\end{align}
where
\begin{enumerate}[(i)]
\item $1\leq q< \infty$ and $q'=\frac{q}{q-1},$ when $Q=4.$
\item $1\leq q<\frac{2Q}{q-4}$ and $q'=\frac{q}{q-1},$ when $Q>4.$
\end{enumerate}
By Theorem \ref{nem}, the Nemytski operator is continuous from $L^q(\Om)$ to $L^{q'}(\Om),$ thus
\be\label{nc}
\norm{f(\cdot,w_n)-f(\cdot,w)}_{q'}\rar 0 \text{ as } n\rar \infty.
\ee
Also,  by Theorem \ref{emb}, $D $ is compactly embedded into $L^q(\Om),$ thus
\be\label{q5}
w_n \longrightarrow w \text{ in } L^q(\Om).
\ee
On using \eqref{nc} and \eqref{q5} in \eqref{c1}, we get
\[\norm{S(w_n)-S(w)} \longrightarrow 0 \text{ as } n\rar \infty.\]
This shows that  $S$ is a compact operator. Now, it remains to show that $S$ maps $\overline{B(0,R)}$ into $\overline{B(0,R)}.$
Consider
\begin{align*}
\norm{Su}&=\sup_{{\substack {v\in D\\ \norm{v}\leq 1}}}\left|\langle Su, v\rangle\right|\\
&\leq \sup_{{\substack {v\in D\\ \norm{v}\leq 1}}} \left|\int_\Om a(\xi) u v d\xi -\int_\Om f(\xi,u) v d\xi \right|\\
& \leq \sup_{{\substack {v\in D\\ \norm{v}\leq 1}}} \left(\int_\Om |a(\xi)||uv| d\xi+\int_\Om |f(\xi,u)||v| d\xi\right)\\
& \leq \sup_{{\substack{v\in D \\ \norm{v}\leq 1}}}\left(\norm{a}_\infty\int_\Om |u||v| d\xi +\int_\Om |f(\xi,u)||v| d\xi \right)\\
&\leq \sup_{{\substack{v\in D \\ \norm{v}\leq 1}}}\left(\norm{a}_\infty\norm{u}_{q'}\norm{v}_{q} +\int_\Om |r(\xi)+cu^{q-1}||v| \right)\,\, \left(\text{by (H3)}\right)\\
&\leq \sup_{{\substack{v\in D \\ \norm{v}\leq 1}}}\left(\norm{a}_\infty \norm{u}_{q'}\norm{v}_{q}+(\norm{r}_{q'}+c\norm{u}_{q'})\norm{v}_q\right)\\
&\leq  c_{emb}^2 \norm{a}_\infty \norm{u}+cc_{emb}^2\norm{u}+c_{emb}\norm{r}_{q'} \,\,\,\text{(by Theorem \ref{emb})},
\end{align*}
where
\begin{enumerate}[(i)]
\item $1\leq q< \infty$ and $q'=\frac{q}{q-1},$ when $Q=4.$
\item $1\leq q<\frac{2Q}{q-4}$ and $q'=\frac{q}{q-1},$ when $Q>4.$
\end{enumerate}
Since $(c+\norm{a}_\infty)c_{emb}^2<1,$ we choose $R>0$ such that
\[c_{emb}\norm{r}_{q'}+(c+\norm{a}_\infty)c_{emb}^2 R<R,\]
 that is,
 \[R>\frac{c_{emb}\norm{r}_{q'}}{1-(c+\norm{a}_\infty)c_{emb}^2},\]
  then $S$ maps $\overline{B(0,R)}$ into $\overline{B(0,R)}.$
Therefore, by Scahuder's fixed point (Theorem\,\ref{schauder}), $S$ has a fixed point and hence \eqref{pm} has a weak solution. 
 An application of  Lemma \ref{l1}, yields that   $u>0$ in $\Omega.$\\\\
 \qed

\noindent\textbf{Proof of Theorem \ref{th1}:} By Theorem\,\ref{th2}, there exists a positive solution $u\in D\cap L^\infty (\Omega)$ of \eqref{pm}.  By Lemma\,\ref{re1}, we get $u\in C^{4,\alpha}(\Omega).$
Now, for any $v\in C_c^2(\Omega),$ we choose
\[\phi=\frac{v^2}{u}\]
as a test function in \eqref{we}. Since
\[\nh  \phi =\frac{2uv\nh  v-v^2\nh  u}{u^2},\] and
\[{\Delta_{\mathbb{H}^n}} \phi=\frac{2u^3 |\nh  v|^2-4 v u^2 \nh  u.\nh  v+2v^2 u |\nh  u|^2+ 2vu^3 {\Delta_{\mathbb{H}^n}} v- v^2u^2 {\Delta_{\mathbb{H}^n}} u}{u^4}\]
so from \eqref{we}, we get
\begin{align*}\int_\Omega{\Delta_{\mathbb{H}^n}} u.\left[\frac{2u^3 |\nh  v|^2-4 v u^2 \nh  u.\nh  v+2v^2 u |\nh  u|^2+ 2vu^3 {\Delta_{\mathbb{H}^n}} v- v^2u^2 {\Delta_{\mathbb{H}^n}} u}{u^4}\right] d\xi  &\\
  = \int_\Omega a(\xi)v^2d\xi -\int_\Omega \frac{f(\xi,u)v^2}{u}d\xi .
\end{align*}
This yields that
\begin{align*}
& \int_\Omega \frac{-4v}{u^2}{\Delta_{\mathbb{H}^n}} u\nh  u.\nh  vdx+ \int_\Omega \frac{2v}{u} {\Delta_{\mathbb{H}^n}} u{\Delta_{\mathbb{H}^n}} vdx+\int_\Omega \frac{2}{u}|\nh  v|^2 {\Delta_{\mathbb{H}^n}} udx -
\int_\Omega\frac{v^2}{u^2}|{\Delta_{\mathbb{H}^n}} u|^2d\xi     \\
& + \int_\Omega \frac{2 v^2}{u^3}|\nh u|^2{\Delta_{\mathbb{H}^n}} udx -\int_\Omega a(\xi) v^2d\xi +{\int_\Omega f(\xi,u)\frac{v^2}{u}d\xi }+{\int_\Omega |{\Delta_{\mathbb{H}^n}} v|^2 d\xi }-\int_\Omega |{\Delta_{\mathbb{H}^n}} v|^2d\xi \\
& =0.
\end{align*}
On re-arranging the terms, we get
\begin{align*}
 \int_\Omega |{\Delta_{\mathbb{H}^n}} v|^2d\xi &-\int_\Omega a(\xi)v^2d\xi + \int_\Omega \frac{f(\xi,u)}{u}v^2d\xi  =\int_\Omega  \Big[~{|{\Delta_{\mathbb{H}^n}} v|^2}+\frac{4v}{u^2}{\Delta_{\mathbb{H}^n}} u \nh  u.\nh  v\\
& -{\frac{2v}{v}{\Delta_{\mathbb{H}^n}} u{\Delta_{\mathbb{H}^n}} v} -\frac{2}{u}|\nh  v|^2 {\Delta_{\mathbb{H}^n}} u +{\frac{v^2}{u^2}|{\Delta_{\mathbb{H}^n}} u|^2}-\frac{2v^2}{u^3}|\nh  u|^2{\Delta_{\mathbb{H}^n}} u\Big]d\xi   \\
&=\int_\Omega \Big[\Big({\Delta_{\mathbb{H}^n}} v-\frac{v}{u}{\Delta_{\mathbb{H}^n}} u\Big)^2 + 
\frac{4v}{u^2}{\Delta_{\mathbb{H}^n}} u \nh  u\nh  v- \frac{2}{u}|\nh  v|^2{\Delta_{\mathbb{H}^n}} u\\
&-\frac{2v^2}{u^3}|\nh  u|^2 {\Delta_{\mathbb{H}^n}} u \Big]d\xi .
\end{align*}
This implies that
\begin{align*}
& \int_\Omega |{\Delta_{\mathbb{H}^n}} v|^2d\xi -\int_\Omega a(\xi)v^2d\xi +\int_\Omega \frac{f(\xi,u)}{u}v^2d\xi \\
&\geq
\int_\Omega \Big[\frac{4v}{u^2}{\Delta_{\mathbb{H}^n}} u \nh  u\nh  v - \frac{2}{u}|\nh  v|^2{\Delta_{\mathbb{H}^n}} u-
\frac{2v^2}{u^3}|\nh  u|^2 {\Delta_{\mathbb{H}^n}} u \Big]d\xi   \\
&= \int_\Omega \Big[ -\frac{2}{u}{\Delta_{\mathbb{H}^n}} u \Big( |\nh  v|^2+\frac{v^2}{u^2}|\nh  u|^2-\frac{2v \nh  u\nh  v  }{u}\Big)\Big]d\xi   \\
&= \int_\Omega \Big(-\frac{2}{u}{\Delta_{\mathbb{H}^n}} u\Big)\Big(\nh  v-\frac{v}{u}\nh  u\Big)^2d\xi \\
&\geq 0\,\, (\mbox{by Lemma}\,\ref{l1}).
\end{align*}
This gives us 
\begin{align*}
\int_\Omega |{\Delta_{\mathbb{H}^n}} v|^2d\xi -\int_\Omega a(\xi)v^2d\xi +\int_\Omega \frac{f(\xi,u)}{u}v^2d\xi  \geq 0.
\end{align*}
Now, using \rm{(H1)}, we obtain
\begin{equation}
\int_\Omega |\Dh v|^2d\xi -\int_\Omega a(\xi)v^2d\xi +\int_\Omega f_{u}(x,u)v^2d\xi \geq 0
\end{equation}
and therefore $u$ is semi-stable. This completes the proof of this theorem.

\section{ A few remarks}
The following remarks are in order:
\begin{rem}
In fact, one can also consider the stability questions to the following class of problems
\begin{equation}\label{rg1}
\left\{
  \begin{array}{ll}
    \Delta_{\mathbb{H}^n}^2u=f_1(\xi,u)\,\,\,\,\text{in}\,\,\,\Omega\\
    u>0 \,\,\,\,\,\, \text{ in } \Omega\\
   u=0=\Delta u \,\,\,\,\, \text{on } \partial\Omega,
 \end{array}
\right.
\end{equation}
where $\Omega\subset\mathbb{H}^n$ is an open, smooth and bounded subset,  $f_{1}\in C(\Omega\times \R,\,\R)$ and $f_{1}(\xi, u)\geq a(\xi) u- f(\xi, u),$ 
where $a$ and $f$ satisfy the same hypotheses as above. More precisely, one can have the following theorem, which is an easy adaptaion of Theorem\,\ref{th1}:
\begin{theorem}
Let $f_{1}\in C(\Omega\times \R,\,\R)$ and $f_{1}(\xi, u)\geq a(\xi) u- f(\xi, u).$  Let \rm{(H1)}--\rm{(H3)} hold. 
Let $u\in D \cap L^{\infty}(\Omega)$ be a positive solution of \eqref{rg1}. Then $u$ is semi-stable.
\end{theorem}

\end{rem}

In the next remark, let us consider the biharmonic boundary value problem posed in an euclidean ball but with Dirichlet's boundary conditions.

\begin{rem}\label{thd2}
Let us consider the following problem in the euclidean domains:
\begin{equation}\label{pd}
\left\{
\begin{array}{ll}
    \Delta^2 u=a(\xi)u-f(\xi,\,u)\,\,\,\mbox{in}\,\,\,\, B,\\
   u=0=\frac{\partial u}{\partial \nu}\,\,\,\,\,\mbox{on} \,\,\,\partial B,
  \end{array}
\right.
\end{equation}
where $B$ denotes unit ball in $\mathbb{R}^n,$ $a\in L^{\infty}(B)$ and $f\in C(\overline{B}\times \R,\,\R).$ Let (H2)--(H3) hold. 
Then, using the similar lines of proof as in Theorem \ref{th2} and Boggio maximum principle \cite{gazz}, one can prove that \eqref{pd} 
has a positive solution and using the similar lines of proof as in Theorem \ref{th1},  it is easy to see that $u$ is semi-stable. For the sake of brevity, we skip the details.
\end{rem}

\begin{rem}
In the context of Remark\,\ref{thd2}, due to the lack of the positivity of Green's function for any arbitrary smooth bounded domain, it would be of interest to establish the existence of
positive solution and its semi-stability
of \eqref{pd} in any arbitrary smooth, bounded domain $\Omega\subset \mathbb{R}^n,$ and in more general, when $\Omega\subset\mathbb{H}^n.$
\end{rem}

\vskip .002in

\begin{center}
{\bf Acknowledgment} \end{center}
The first author thanks DST/SERB for the financial support under the grant MTR/2018/000233 and the second author thanks DST/SERB for the financial support under the grant MTR/2018/000096.

\end{document}